\renewcommand{\labelenumi}{\rm (\arabic{enumi})}
\numberwithin{equation}{section}
\newtheorem{thm}{Theorem}[]
\newtheorem{lmm}[thm]{Lemma}
\theoremstyle{definition}
\theoremstyle{remark}
\DeclareMathOperator{\Diff}{\bf Diff}
\DeclareMathOperator{\Top}{\bf Top}
\DeclareMathOperator{\NumG}{\bf NG}
\DeclareMathOperator{\STop}{\bf STop}
\DeclareMathOperator{\Ho}{\bf Ho}
\DeclareMathOperator{\smap}{\bf smap}
\DeclareMathOperator{\Cinfty}{{\mathit C}^{\infty}}
\DeclareMathOperator{\id}{id}
\DeclareMathOperator{\Sd}{Sd}
\DeclareMathOperator{\Int}{Int}
\DeclareMathOperator{\Loops}{Loops}
\DeclareMathOperator{\Fr}{Fr}
\DeclareMathOperator*{\colim}{colim}
\newcommand{\norm}[1]{\|{#1}\|}
\newcommand{\blck}[1]{\langle{#1}\rangle}
\newcommand{\R}{\mathbf{R}}
\newcommand{\I}{\mathcal{I}}
\newcommand{\J}{\mathcal{J}}
\newcommand{\K}{\mathcal{K}}
\newcommand{\semicolon}{\,;}
\newcommand{\bI}{\partial I}
\newcommand{\tI}{\tilde{I}}
\newcommand{\abs}[1]{\lvert{#1}\rvert}
\newcommand{\rel}[1]{\ \mbox{\rm rel}\ #1}
\title{The Baumkuchen Theorem}
\author[M. Ando]{Masanori Ando} %
\address{Faculty of Education for Human Growth \\ Nara Gakuen
  University \\ Nara 636-8503 \\ Japan} %
\email{m-ando@naragakuen-u.jp} 
\author[T. Haraguchi]{Tadayuki Haraguchi} %
\address{Faculty of Education for Human Growth \\ Nara Gakuen
  University \\ Nara 636-8503 \\ Japan} %
\email{t-haraguchi@naragakuen-u.jp} %
\date{\today} %
\subjclass[$2020$]{Primary 51M04 ; Secondary 51M05} %
\begin{document}
\maketitle
\begin{abstract}
In this paper,
we present \emph{the Baumkuchen Theorem} (cf.~Theorem \ref{Baumkuchen Theorem}) 
related to the combination of divided Baumkuchen pieces.
It can be proved using the basic properties of elementary geometry.
We also apply some lemmas to prove \emph{the Pizza Theorem} (cf.~Theorem \ref{thm:Pizza Theorem}).
\end{abstract}
%
%
%
%
%
%
%
First,
we present the statement of \emph{the Baumkuchen Theorem}.
%
%
%
%
%
%
%
\[
{\unitlength 0.1in%
\begin{picture}(45.9000,26.5000)(5.9000,-36.7000)%
%
\special{pn 8}%
\special{ar 1610 2220 1020 1020 0.0000000 6.2831853}%
%
\special{pn 8}%
\special{ar 4190 2200 990 990 0.0000000 6.2831853}%
%
\special{pn 8}%
\special{ar 1610 2200 600 600 0.0000000 6.2831853}%
%
\special{pn 8}%
\special{ar 4190 2210 600 600 0.0000000 6.2831853}%
\put(42.1000,-21.1000){\makebox(0,0)[lb]{$\scriptstyle O$}}%
\put(42.0000,-22.1000){\makebox(0,0)[lb]{$\bullet$}}%
\put(42.0000,-22.1000){\makebox(0,0)[lb]{$\bullet$}}%
\put(16.1000,-22.1000){\makebox(0,0)[lb]{$\bullet$}}%
\put(16.2000,-21.1000){\makebox(0,0)[lb]{$\scriptstyle O$}}%
%
\special{pn 20}%
\special{pa 1490 1210}%
\special{pa 600 2100}%
\special{fp}%
\special{pa 1560 1200}%
\special{pa 590 2170}%
\special{fp}%
\special{pa 1620 1200}%
\special{pa 590 2230}%
\special{fp}%
\special{pa 1680 1200}%
\special{pa 590 2290}%
\special{fp}%
\special{pa 1730 1210}%
\special{pa 600 2340}%
\special{fp}%
\special{pa 1080 1920}%
\special{pa 610 2390}%
\special{fp}%
\special{pa 1040 2020}%
\special{pa 620 2440}%
\special{fp}%
\special{pa 1020 2100}%
\special{pa 630 2490}%
\special{fp}%
\special{pa 1010 2170}%
\special{pa 650 2530}%
\special{fp}%
\special{pa 1010 2230}%
\special{pa 660 2580}%
\special{fp}%
\special{pa 1020 2280}%
\special{pa 680 2620}%
\special{fp}%
\special{pa 1020 2340}%
\special{pa 700 2660}%
\special{fp}%
\special{pa 1040 2380}%
\special{pa 720 2700}%
\special{fp}%
\special{pa 1050 2430}%
\special{pa 740 2740}%
\special{fp}%
\special{pa 1070 2470}%
\special{pa 760 2780}%
\special{fp}%
\special{pa 1090 2510}%
\special{pa 790 2810}%
\special{fp}%
\special{pa 1120 2540}%
\special{pa 810 2850}%
\special{fp}%
\special{pa 1140 2580}%
\special{pa 840 2880}%
\special{fp}%
\special{pa 1170 2610}%
\special{pa 870 2910}%
\special{fp}%
\special{pa 1200 2640}%
\special{pa 900 2940}%
\special{fp}%
\special{pa 1230 2670}%
\special{pa 930 2970}%
\special{fp}%
\special{pa 1270 2690}%
\special{pa 960 3000}%
\special{fp}%
\special{pa 1300 2720}%
\special{pa 1000 3020}%
\special{fp}%
\special{pa 1340 2740}%
\special{pa 1030 3050}%
\special{fp}%
\special{pa 1380 2760}%
\special{pa 1060 3080}%
\special{fp}%
\special{pa 1430 2770}%
\special{pa 1100 3100}%
\special{fp}%
\special{pa 1480 2780}%
\special{pa 1140 3120}%
\special{fp}%
\special{pa 1530 2790}%
\special{pa 1180 3140}%
\special{fp}%
\special{pa 1580 2800}%
\special{pa 1220 3160}%
\special{fp}%
\special{pa 1640 2800}%
\special{pa 1270 3170}%
\special{fp}%
\special{pa 1710 2790}%
\special{pa 1310 3190}%
\special{fp}%
\special{pa 1790 2770}%
\special{pa 1360 3200}%
\special{fp}%
\special{pa 1890 2730}%
\special{pa 1400 3220}%
\special{fp}%
\special{pa 2620 2060}%
\special{pa 1450 3230}%
\special{fp}%
\special{pa 2620 2120}%
\special{pa 1510 3230}%
\special{fp}%
\special{pa 2630 2170}%
\special{pa 1560 3240}%
\special{fp}%
\special{pa 2630 2230}%
\special{pa 1620 3240}%
\special{fp}%
\special{pa 2630 2290}%
\special{pa 1680 3240}%
\special{fp}%
\special{pa 2620 2360}%
\special{pa 1750 3230}%
\special{fp}%
\special{pa 2600 2440}%
\special{pa 1830 3210}%
\special{fp}%
\special{pa 2580 2520}%
\special{pa 1910 3190}%
\special{fp}%
\special{pa 2550 2610}%
\special{pa 2000 3160}%
\special{fp}%
\special{pa 2480 2740}%
\special{pa 2130 3090}%
\special{fp}%
\special{pa 2610 2010}%
\special{pa 2140 2480}%
\special{fp}%
\special{pa 2590 1970}%
\special{pa 2180 2380}%
\special{fp}%
\special{pa 2580 1920}%
\special{pa 2200 2300}%
\special{fp}%
\special{pa 2560 1880}%
\special{pa 2210 2230}%
\special{fp}%
\special{pa 2550 1830}%
\special{pa 2210 2170}%
\special{fp}%
\special{pa 2530 1790}%
\special{pa 2200 2120}%
\special{fp}%
\special{pa 2510 1750}%
\special{pa 2200 2060}%
\special{fp}%
%
\special{pn 20}%
\special{pa 2490 1710}%
\special{pa 2180 2020}%
\special{fp}%
\special{pa 2470 1670}%
\special{pa 2170 1970}%
\special{fp}%
\special{pa 2440 1640}%
\special{pa 2150 1930}%
\special{fp}%
\special{pa 2410 1610}%
\special{pa 2130 1890}%
\special{fp}%
\special{pa 2390 1570}%
\special{pa 2100 1860}%
\special{fp}%
\special{pa 2360 1540}%
\special{pa 2080 1820}%
\special{fp}%
\special{pa 2330 1510}%
\special{pa 2050 1790}%
\special{fp}%
\special{pa 2300 1480}%
\special{pa 2020 1760}%
\special{fp}%
\special{pa 2270 1450}%
\special{pa 1990 1730}%
\special{fp}%
\special{pa 2240 1420}%
\special{pa 1950 1710}%
\special{fp}%
\special{pa 2200 1400}%
\special{pa 1920 1680}%
\special{fp}%
\special{pa 2170 1370}%
\special{pa 1880 1660}%
\special{fp}%
\special{pa 2130 1350}%
\special{pa 1840 1640}%
\special{fp}%
\special{pa 2090 1330}%
\special{pa 1790 1630}%
\special{fp}%
\special{pa 2050 1310}%
\special{pa 1740 1620}%
\special{fp}%
\special{pa 2010 1290}%
\special{pa 1690 1610}%
\special{fp}%
\special{pa 1970 1270}%
\special{pa 1640 1600}%
\special{fp}%
\special{pa 1920 1260}%
\special{pa 1580 1600}%
\special{fp}%
\special{pa 1880 1240}%
\special{pa 1510 1610}%
\special{fp}%
\special{pa 1830 1230}%
\special{pa 1430 1630}%
\special{fp}%
\special{pa 1780 1220}%
\special{pa 1330 1670}%
\special{fp}%
\special{pa 1420 1220}%
\special{pa 610 2030}%
\special{fp}%
\special{pa 1340 1240}%
\special{pa 630 1950}%
\special{fp}%
\special{pa 1250 1270}%
\special{pa 660 1860}%
\special{fp}%
\special{pa 1140 1320}%
\special{pa 720 1740}%
\special{fp}%
\put(12.2000,-35.0000){\makebox(0,0)[lb]{Baumkuchen $B$}}%
\put(39.7000,-26.5000){\makebox(0,0)[lb]{$\bullet$}}%
\put(40.3000,-24.8000){\makebox(0,0)[lb]{$\scriptstyle P$}}%
%
\special{pn 8}%
\special{pa 4000 3170}%
\special{pa 4000 1240}%
\special{fp}%
\special{pa 4000 1240}%
\special{pa 4000 1240}%
\special{fp}%
%
\special{pn 8}%
\special{pa 3300 2610}%
\special{pa 5100 2610}%
\special{fp}%
%
\special{pn 8}%
\special{pa 4530 3130}%
\special{pa 3260 1890}%
\special{fp}%
%
\special{pn 8}%
\special{pa 3600 2980}%
\special{pa 5040 1670}%
\special{fp}%
\special{pa 5040 1670}%
\special{pa 5040 1670}%
\special{fp}%
\special{pa 5040 1670}%
\special{pa 5040 1670}%
\special{fp}%
\put(39.2000,-11.5000){\makebox(0,0)[lb]{$\scriptstyle A_{2}$}}%
\put(51.0000,-16.4000){\makebox(0,0)[lb]{$\scriptstyle A_{3}$}}%
\put(51.8000,-27.0000){\makebox(0,0)[lb]{$\scriptstyle A_{4}$}}%
\put(45.6000,-32.6000){\makebox(0,0)[lb]{$\scriptstyle A_{5}$}}%
\put(34.2000,-31.3000){\makebox(0,0)[lb]{$\scriptstyle A_{7}$}}%
\put(30.8000,-26.8000){\makebox(0,0)[lb]{$\scriptstyle A_{8}$}}%
\put(30.8000,-18.8000){\makebox(0,0)[lb]{$\scriptstyle A_{1}$}}%
\put(39.4000,-33.1000){\makebox(0,0)[lb]{$\scriptstyle A_{6}$}}%
\put(38.4000,-16.2000){\makebox(0,0)[lb]{$\scriptstyle \tilde{A}_{2}$}}%
\put(47.9000,-20.2000){\makebox(0,0)[lb]{$\scriptstyle \tilde{A}_{3}$}}%
\put(47.6000,-25.6000){\makebox(0,0)[lb]{$\scriptstyle \tilde{A}_{4}$}}%
\put(43.5000,-29.2000){\makebox(0,0)[lb]{$\scriptstyle \tilde{A}_{5}$}}%
\put(40.3000,-29.5000){\makebox(0,0)[lb]{$\scriptstyle \tilde{A}_{6}$}}%
\put(36.6000,-27.8000){\makebox(0,0)[lb]{$\scriptstyle \tilde{A}_{7}$}}%
\put(35.0000,-25.6000){\makebox(0,0)[lb]{$\scriptstyle \tilde{A}_{8}$}}%
\put(34.4000,-23.0000){\makebox(0,0)[lb]{$\scriptstyle \tilde{A}_{1}$}}%
\put(13.9000,-38.0000){\makebox(0,0)[lb]{{\bf Figure 1}}}%
\put(40.1000,-38.0000){\makebox(0,0)[lb]{{\bf Figure 2}}}%
%
\special{pn 4}%
\special{pa 4500 1260}%
\special{pa 4150 1610}%
\special{fp}%
\special{pa 4450 1250}%
\special{pa 4080 1620}%
\special{fp}%
\special{pa 4400 1240}%
\special{pa 4010 1630}%
\special{fp}%
\special{pa 4350 1230}%
\special{pa 4000 1580}%
\special{fp}%
\special{pa 4300 1220}%
\special{pa 4000 1520}%
\special{fp}%
\special{pa 4250 1210}%
\special{pa 4000 1460}%
\special{fp}%
\special{pa 4190 1210}%
\special{pa 4000 1400}%
\special{fp}%
\special{pa 4130 1210}%
\special{pa 4000 1340}%
\special{fp}%
\special{pa 4060 1220}%
\special{pa 4000 1280}%
\special{fp}%
\special{pa 4540 1280}%
\special{pa 4210 1610}%
\special{fp}%
\special{pa 4580 1300}%
\special{pa 4260 1620}%
\special{fp}%
\special{pa 4620 1320}%
\special{pa 4320 1620}%
\special{fp}%
\special{pa 4660 1340}%
\special{pa 4360 1640}%
\special{fp}%
\special{pa 4700 1360}%
\special{pa 4410 1650}%
\special{fp}%
\special{pa 4740 1380}%
\special{pa 4450 1670}%
\special{fp}%
\special{pa 4770 1410}%
\special{pa 4490 1690}%
\special{fp}%
\special{pa 4810 1430}%
\special{pa 4530 1710}%
\special{fp}%
\special{pa 4840 1460}%
\special{pa 4560 1740}%
\special{fp}%
\special{pa 4870 1490}%
\special{pa 4590 1770}%
\special{fp}%
\special{pa 4900 1520}%
\special{pa 4630 1790}%
\special{fp}%
\special{pa 4930 1550}%
\special{pa 4650 1830}%
\special{fp}%
\special{pa 4960 1580}%
\special{pa 4680 1860}%
\special{fp}%
\special{pa 4980 1620}%
\special{pa 4700 1900}%
\special{fp}%
\special{pa 5010 1650}%
\special{pa 4720 1940}%
\special{fp}%
%
\special{pn 4}%
\special{pa 4000 2780}%
\special{pa 3720 3060}%
\special{fp}%
\special{pa 4000 2840}%
\special{pa 3760 3080}%
\special{fp}%
\special{pa 4000 2900}%
\special{pa 3800 3100}%
\special{fp}%
\special{pa 4000 2960}%
\special{pa 3840 3120}%
\special{fp}%
\special{pa 4000 3020}%
\special{pa 3880 3140}%
\special{fp}%
\special{pa 4000 3080}%
\special{pa 3930 3150}%
\special{fp}%
\special{pa 4000 3140}%
\special{pa 3980 3160}%
\special{fp}%
\special{pa 3960 2760}%
\special{pa 3680 3040}%
\special{fp}%
\special{pa 3920 2740}%
\special{pa 3640 3020}%
\special{fp}%
\special{pa 3770 2830}%
\special{pa 3610 2990}%
\special{fp}%
\put(41.4000,-35.0000){\makebox(0,0)[lb]{$n=4$}}%
\end{picture}}%
\]

Throughout this paper, we assume the following.
Let $D$ and $\tilde{D}$ be two disks centered at the point $O$ such that $\tilde{D}$ is a subset of $D$.
Let $B$ be the absolute complement of the interior of $\tilde{D}$ in $D$.
Then its shape represents Baumkuchen (see {\bf Figure 1}).
Let $P$ be a point of $\tilde{D}$,
and let $n$ be an even number of $4$ or more.
Then divide the sets $D, \ \tilde{D}$ and $B$ into $2n$ slices by constructing $n$ straight lines through $P$,
where the lines meet to form $2n$ equal angles.
In the clockwise direction,
let $A_{1},\ A_{2},\ \cdots ,\ \mbox{and}\ A_{2n}$ 
be the intersections of the straight lines passing through the point $P$ and the boundary of $D$.
Similarly,
let $\tilde{A}_{1},\ \tilde{A}_{2},\ \cdots, \ \mbox{and} \ \tilde{A}_{2n}$ be the intersections on the boundary of $\tilde{D}$ (see {\bf Figure 2}).
We denote the areas of the slice $A_{k}A_{k+1}P$ and sector $A_{k}A_{k+1}O$ by $Sl(A_{k})_{P}$ and $Sec(A_{k})_{O}$,
respectively.
In particular,
$Sl(A_{2n})_{P}$ and $Sec(A_{2n})_{O}$ are the areas of the slice $A_{2n}A_{1}P$ and sector $A_{2n} A_{1} O$,
respectively.
Similarly we use $Sl(\tilde{A_{k}})_{P}$ and $Sec(\tilde{A}_{k})_{O}$ on the disk $\tilde{D}$.
Let $Piece(A_{k})$ denote the area of piece $A_{k}A_{k+1}\tilde{A}_{k+1} \tilde{A}_{k}$ on the Baumkuchen $B$.
Then we obtain the following.
\begin{thm}[Baumkuchen Theorem]\label{Baumkuchen Theorem}
For each $1 \leq k \leq n$,
the sum of $Piece(A_{k})$ and $Piece(A_{k+n})$ is $\frac{1}{n}$ times the area of the Baumkuchen $B$.
\end{thm}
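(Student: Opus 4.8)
The plan is to reduce the statement to one elementary fact about a ``bow-tie'' (double wedge) through an interior point of a disk. First I would identify each Baumkuchen piece with a wedge: since $P$ lies inside $\tilde D\subset D$, the point $\tilde A_j$ sits on the segment $PA_j$, so the piece $A_jA_{j+1}\tilde A_{j+1}\tilde A_j$ is exactly the intersection of $B$ with the convex wedge $W_j$ at $P$ bounded by the rays $PA_j$ and $PA_{j+1}$, and
\[
Piece(A_j)=Sl(A_j)_P-Sl(\tilde A_j)_P .
\]
Next, since the $n$ lines through $P$ create $2n$ equal angles, consecutive rays $PA_j,\,PA_{j+1}$ differ by $\pi/n$; hence $PA_k$ and $PA_{k+n}$ differ by $\pi$, i.e.\ they are opposite rays of one of the $n$ lines, and likewise $PA_{k+1}$ and $PA_{k+n+1}$. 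Therefore $W_k$ and $W_{k+n}$ are a pair of vertical angles of opening $\pi/n$, their union $W_k\cup W_{k+n}$ is the bow-tie cut out by the two lines through $P$ that carry $PA_k$ and $PA_{k+1}$, and $Piece(A_k)+Piece(A_{k+n})$ equals the area of $(W_k\cup W_{k+n})\cap B$.

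The key step is the claim that, for every circle $C_\rho$ centered at $O$ of radius $\rho>\abs{OP}$, the bow-tie $W_k\cup W_{k+n}$ meets $C_\rho$ in a pair of arcs whose central angles sum to $2\pi/n$, regardless of $\rho$. This is immediate from the classical theorem that the angle formed by two chords meeting inside a circle equals half the sum of the two arcs intercepted by that angle and its vertical angle: the two lines through $P$ cut $C_\rho$ in two chords crossing at $P$ at angle $\pi/n$, and the two arcs facing into the bow-tie are precisely those intercepted arcs, so their central angles add up to $2\cdot(\pi/n)$. Because $P\in\tilde D$, the radius $r$ of $\tilde D$ is at least $\abs{OP}$, so this applies to every circle $C_\rho$ with $r\le\rho\le R$, where $R$ is the radius of $D$. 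Consequently $(W_k\cup W_{k+n})\cap B$ and the genuine annular sector of angle $2\pi/n$ between radii $r$ and $R$ meet every such circle in arcs of the same total angular measure, so by Cavalieri's principle (in polar coordinates about $O$) they enclose the same area:
\[
Piece(A_k)+Piece(A_{k+n})=\frac{2\pi/n}{2\pi}\,\pi\bigl(R^{2}-r^{2}\bigr)=\frac1n\,\pi\bigl(R^{2}-r^{2}\bigr),
\]
which is $\tfrac1n$ times the area of $B$.

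The crux is this key claim, or really the insight behind it: the ``unfair'' part of a single slice $Sl(A_j)_P$ --- the amount by which it deviates from $\tfrac1{2n}$ of the disk --- changes with the radius of the circle only by a quantity that cancels between $D$ and $\tilde D$. If one prefers not to invoke the chord-angle theorem, the same conclusion follows by direct computation: writing each slice as $\tfrac12\int\rho_+(\theta)^2\,d\theta$ in polar coordinates about $P$, the square-root terms carrying the chord half-lengths cancel in the sum $Sl(A_k)_P+Sl(A_{k+n})_P$, which collapses to $\tfrac{\pi}{n}\rho^{2}$ plus a correction depending only on $\abs{OP}$ and on the directions of the two lines --- the same correction for $D$ as for $\tilde D$, so it drops out of the difference. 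This is exactly why the theorem holds for an arbitrary $P\in\tilde D$, even though the analogous ``fair'' splitting of a single disk seen from $P$ fails in general; everything else is bookkeeping with the cyclic indices.
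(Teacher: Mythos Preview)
Your argument is correct and takes a genuinely different route from the paper's. The paper introduces an auxiliary disk $D'$ centred at $O$ of radius $\abs{OP}$, so that $P$ lies on its boundary, and writes $B$ as the difference of the two annuli $B_1=D\setminus\Int D'$ and $B_2=\tilde D\setminus\Int D'$; this reduces Theorem~\ref{Baumkuchen Theorem} to the special case where $P$ sits on the inner boundary circle (Lemma~\ref{lemma3}). That special case is then handled by a synthetic dissection: the inscribed angle theorem forces the chord endpoints $\tilde A_1,\dots,\tilde A_n$ to be the vertices of a regular $n$-gon (Lemma~\ref{lemma2}), and an iterated cut-and-paste reassembles the two opposite pieces into a genuine annular sector of central angle $2\pi/n$. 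You bypass both the reduction step and the dissection by invoking the interior-angle/intercepted-arcs theorem on every concentric circle $C_\rho$ simultaneously and then integrating via Cavalieri in polar coordinates about $O$. Your approach is shorter and makes the radius-independence of the total intercepted arc completely transparent; the paper's approach stays entirely within scissors-congruence and never integrates, which is closer in spirit to a ``proof without words'' and yields, as a by-product, the regular-polygon Lemma~\ref{lemma2} that they reuse in their proof of the Pizza Theorem.
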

\begin{proof}
There is a disk $D^{\prime}$ whose radius is the line segment $OP$.
Let $B_{1}$ and $B_{2}$ be two Baumkuchens,
where $B_{1}=D \setminus {\rm Int}\ D^{\prime}$ and $B_{2}=\tilde{D} \setminus {\rm Int}\ D^{\prime}$.
Applying Lemma \ref{lemma3} on $B_{1}$ and $B_{2}$,
we deduce the Baumkuchen Theorem,
as claimed.
\end{proof}
To prove this,
we first present Lemmas \ref{lemma2} and \ref{lemma3}.
Suppose the point $P$ is on the boundary of the disk $\tilde{D}$ (see {\bf Figure 3}).
Then there are intersections $P,\ \tilde{A}_{1},\ \tilde{A}_{2} \cdots ,\ \mbox{and} \ \tilde{A}_{n}$ between $n$ straight lines passing through $P$ and the boundary of the disk $\tilde{D}$,
where number the intersections clockwise from point $P$ in succession.
\[
{\unitlength 0.1in%
\begin{picture}(21.6000,27.1000)(12.6000,-39.3000)%
%
\special{pn 8}%
\special{ar 2400 2420 600 600 0.0000000 6.2831853}%
%
\special{pn 8}%
\special{ar 2400 2420 1020 1020 0.0000000 6.2831853}%
%
\special{pn 8}%
\special{pa 2200 3420}%
\special{pa 2200 1420}%
\special{fp}%
%
\special{pn 8}%
\special{pa 1550 2980}%
\special{pa 3260 2980}%
\special{fp}%
%
\special{pn 8}%
\special{pa 1880 3300}%
\special{pa 3280 1900}%
\special{fp}%
%
\special{pn 8}%
\special{pa 2600 3410}%
\special{pa 1400 2160}%
\special{fp}%
\put(23.7000,-24.6000){\makebox(0,0)[lb]{$\bullet$}}%
\put(21.7000,-30.2000){\makebox(0,0)[lb]{$\bullet$}}%
\put(21.4000,-13.5000){\makebox(0,0)[lb]{$\scriptstyle A_{2}$}}%
\put(12.6000,-21.3000){\makebox(0,0)[lb]{$\scriptstyle A_{1}$}}%
\put(33.6000,-18.7000){\makebox(0,0)[lb]{$\scriptstyle A_{3}$}}%
\put(33.7000,-30.5000){\makebox(0,0)[lb]{$\scriptstyle A_{4}$}}%
\put(26.1000,-35.5000){\makebox(0,0)[lb]{$\scriptstyle A_{5}$}}%
\put(21.5000,-36.0000){\makebox(0,0)[lb]{$\scriptstyle A_{6}$}}%
\put(17.2000,-34.5000){\makebox(0,0)[lb]{$\scriptstyle A_{7}$}}%
\put(13.7000,-30.7000){\makebox(0,0)[lb]{$\scriptstyle A_{8}$}}%
\put(23.9000,-22.8000){\makebox(0,0)[lb]{$\scriptstyle O$}}%
\put(22.4000,-28.0000){\makebox(0,0)[lb]{$\scriptstyle P$}}%
\put(16.7000,-27.4000){\makebox(0,0)[lb]{$\scriptstyle \tilde{A}_{1}$}}%
\put(20.2000,-18.3000){\makebox(0,0)[lb]{$\scriptstyle \tilde{A}_{2}$}}%
\put(30.7000,-23.0000){\makebox(0,0)[lb]{$\scriptstyle \tilde{A}_{3}$}}%
\put(25.9000,-31.5000){\makebox(0,0)[lb]{$\scriptstyle \tilde{A}_{4}$}}%
%
\special{pn 4}%
\special{pa 2200 2180}%
\special{pa 1820 2560}%
\special{fp}%
\special{pa 2200 2240}%
\special{pa 1830 2610}%
\special{fp}%
\special{pa 2200 2300}%
\special{pa 1860 2640}%
\special{fp}%
\special{pa 2200 2360}%
\special{pa 1890 2670}%
\special{fp}%
\special{pa 2200 2420}%
\special{pa 1920 2700}%
\special{fp}%
\special{pa 2200 2480}%
\special{pa 1950 2730}%
\special{fp}%
\special{pa 2200 2540}%
\special{pa 1980 2760}%
\special{fp}%
\special{pa 2200 2600}%
\special{pa 2010 2790}%
\special{fp}%
\special{pa 2200 2660}%
\special{pa 2040 2820}%
\special{fp}%
\special{pa 2200 2720}%
\special{pa 2070 2850}%
\special{fp}%
\special{pa 2200 2780}%
\special{pa 2100 2880}%
\special{fp}%
\special{pa 2200 2840}%
\special{pa 2130 2910}%
\special{fp}%
\special{pa 2200 2900}%
\special{pa 2160 2940}%
\special{fp}%
\special{pa 2200 2120}%
\special{pa 1810 2510}%
\special{fp}%
\special{pa 2200 2060}%
\special{pa 1800 2460}%
\special{fp}%
\special{pa 2200 2000}%
\special{pa 1800 2400}%
\special{fp}%
\special{pa 2200 1940}%
\special{pa 1810 2330}%
\special{fp}%
\special{pa 2200 1880}%
\special{pa 1830 2250}%
\special{fp}%
\special{pa 2130 1890}%
\special{pa 1870 2150}%
\special{fp}%
%
\special{pn 4}%
\special{pa 3000 2400}%
\special{pa 2420 2980}%
\special{fp}%
\special{pa 2990 2350}%
\special{pa 2360 2980}%
\special{fp}%
\special{pa 2990 2290}%
\special{pa 2300 2980}%
\special{fp}%
\special{pa 2970 2250}%
\special{pa 2240 2980}%
\special{fp}%
\special{pa 3000 2460}%
\special{pa 2480 2980}%
\special{fp}%
\special{pa 2990 2530}%
\special{pa 2540 2980}%
\special{fp}%
\special{pa 2960 2620}%
\special{pa 2600 2980}%
\special{fp}%
\special{pa 2910 2730}%
\special{pa 2710 2930}%
\special{fp}%
\put(22.4000,-38.1000){\makebox(0,0)[lb]{$n=4$}}%
\put(21.4000,-40.6000){\makebox(0,0)[lb]{{\bf Figure 3}}}%
%
\special{pn 20}%
\special{pa 2190 1860}%
\special{pa 1840 2640}%
\special{fp}%
\special{pa 1840 2640}%
\special{pa 1840 2640}%
\special{fp}%
%
\special{pn 20}%
\special{pa 2210 1860}%
\special{pa 2950 2210}%
\special{fp}%
%
\special{pn 20}%
\special{pa 2960 2220}%
\special{pa 2620 2970}%
\special{fp}%
%
\special{pn 20}%
\special{pa 2630 2980}%
\special{pa 1850 2630}%
\special{fp}%
%
\special{pn 13}%
\special{pa 2620 2970}%
\special{pa 2190 1860}%
\special{fp}%
%
\special{pn 13}%
\special{pa 1850 2630}%
\special{pa 2970 2220}%
\special{fp}%
\end{picture}}%
\]
\begin{lmm}\label{lemma2}
Suppose the point $P$ is on the boundary of the disk $\tilde{D}$.
Then we have the followings.
\begin{enumerate}
\item
The $n$-sided polygon $\tilde{A}_{1}\tilde{A}_{2} \cdots \tilde{A}_{n}$ is regular.
\item
For each $ 1 \leq k \leq \frac{n}{2}-1$,
the sum of $Sl(\tilde{A}_{k})_{P}$ and $Sl(\tilde{A}_{k+\frac{n}{2}})_{P}$ is $\frac{2}{n}$ times the area of the disk $\tilde{D}$.
\item
The sum of $Sl(\tilde{A}_{\frac{n}{2}})_{P}$ and 
the area of the figure surrounded by the line segments $\tilde{A}_{1}P,\ \tilde{A}_{n}P$ and arc $\tilde{A}_{1}P\tilde{A}_{n}$
is $\frac{2}{n}$ times the area of the disk $\tilde{D}$.
\end{enumerate}
\end{lmm}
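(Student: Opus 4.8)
The plan is to prove (1) by an inscribed‑angle argument and then to reduce (2) and (3) to one elementary fact about areas of triangles sharing a common base.

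\emph{Part (1).} Since $P,\tilde A_1,\dots,\tilde A_n$ all lie on the circle $\partial\tilde D$, each $\angle \tilde A_k P\tilde A_{k+1}$ is an inscribed angle of that circle, and the chords $P\tilde A_k$, $P\tilde A_{k+1}$ lie along consecutive lines through $P$, so $\angle \tilde A_k P\tilde A_{k+1}=\pi/n$ for $1\le k\le n-1$. By the inscribed angle theorem the arc $\tilde A_k\tilde A_{k+1}$ not containing $P$ has central angle $2\pi/n$; these $n-1$ arcs account for $(n-1)\cdot 2\pi/n$ of the circle, so the remaining arc $\tilde A_n P\tilde A_1$ also has central angle $2\pi/n$. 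Hence all $n$ arcs $\tilde A_k\tilde A_{k+1}$ (indices mod $n$) are equal, so $\tilde A_1\tilde A_2\cdots\tilde A_n$ is a regular $n$-gon.

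\emph{Reduction of (2) and (3).} Write $\ell$ and $a$ for the side length and apothem of this regular $n$-gon, let $s$ be the common area of the circular segment of $\tilde D$ cut off by one of its sides, and let $T_j$ be the area of the triangle $P\tilde A_j\tilde A_{j+1}$ (indices mod $n$, so $T_n$ is the area of $P\tilde A_n\tilde A_1$). Cutting $Sl(\tilde A_k)_P$ along the chord $\tilde A_k\tilde A_{k+1}$ gives $Sl(\tilde A_k)_P=T_k+s$ for $1\le k\le n-1$. Since $P$ lies on the arc $\tilde A_n\tilde A_1$, the $n$-th side $\tilde A_n\tilde A_1$ cuts off a (convex, as $n\ge 4$) circular segment of area $s$ containing $P$, and removing the triangle $P\tilde A_1\tilde A_n$ from it leaves exactly the figure of part (3), whose area is therefore $s-T_n$. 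Using $\mathrm{area}(\tilde D)=\mathrm{area}(n\text{-gon})+ns$ and $\mathrm{area}(n\text{-gon})=\tfrac12 n\ell a$, a short computation turns (2) into $T_k+T_{k+n/2}=\ell a$ for $1\le k\le n/2-1$, and (3) into $T_{n/2}-T_n=\ell a$.

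\emph{The geometric core.} Let $L_j$ denote the line through the $j$-th side $\tilde A_j\tilde A_{j+1}$, so $T_j=\tfrac12\ell\cdot\mathrm{dist}(P,L_j)$. Because $n$ is even, $L_j$ and $L_{j+n/2}$ are parallel and the $n$-gon lies in the strip of width $2a$ between them. The key point is to locate $P$: the circular segments cut off by the $n$ sides have pairwise disjoint interiors and $P$ lies on the arc of the $n$-th one, so $P$ lies strictly on the $n$-gon side of $L_j$ for every $j\neq n$. Hence for $1\le k\le n/2-1$ (where $k\neq n$ and $k+n/2\neq n$), $P$ lies inside the strip between $L_k$ and $L_{k+n/2}$, so $\mathrm{dist}(P,L_k)+\mathrm{dist}(P,L_{k+n/2})=2a$ and $T_k+T_{k+n/2}=\ell a$. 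For the pair $(L_{n/2},L_n)$, $P$ lies beyond $L_n$ but still on the $n$-gon side of $L_{n/2}$, so $\mathrm{dist}(P,L_{n/2})-\mathrm{dist}(P,L_n)=2a$ and $T_{n/2}-T_n=\ell a$. Together with the reductions above this proves (2) and (3). I expect this side‑of‑the‑line determination to be the only step needing care; everything else is the inscribed angle fact and routine area bookkeeping.
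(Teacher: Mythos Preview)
Your argument is correct and follows essentially the same line as the paper: the inscribed angle theorem for (1), and for (2)--(3) the fact that opposite sides of the regular $n$-gon are parallel, so the sum (resp.\ difference) of the distances from $P$ to an opposite pair of side-lines is $2a$. The only cosmetic difference is that the paper phrases the triangle step as ``$\mathrm{area}(\tilde A_k\tilde A_{k+1}P)+\mathrm{area}(\tilde A_{k+n/2}\tilde A_{k+n/2+1}P)=\mathrm{area}(\tilde A_k\tilde A_{k+1}O)+\mathrm{area}(\tilde A_{k+n/2}\tilde A_{k+n/2+1}O)$'' and then reads off the sector areas, whereas you compute the common value $\ell a$ directly; your handling of (3) via $s-T_n$ and the signed distance to $L_n$ makes explicit what the paper leaves to the reader.
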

\begin{proof}
According to the inscribed angle theorem,
for each $k=1,2 \cdots  n-1$,
the angle $\tilde{A}_{k}\tilde{A}_{k+1}O$ is twice the angle $\tilde{A}_{k} \tilde{A}_{k+1}P$.
Thus,
the $n$-sided polygon $\tilde{A}_{1} \tilde{A}_{2} \cdots \tilde{A}_{n}$ is regular.
Since all opposite sides of the regular $n$ polygon are parallel and of equal length,
the sum of the areas of the triangles $\tilde{A}_{k} \tilde{A}_{k+1}P$ and $\tilde{A}_{k+\frac{n}{2}} \tilde{A}_{k+\frac{n}{2}+1}P$ is equal to the sum of the areas of the triangles 
 $\tilde{A}_{k} \tilde{A}_{k+1}O$ and $\tilde{A}_{k+\frac{n}{2}} \tilde{A}_{k+\frac{n}{2}+1}O$,
where $k=1,2,\cdots , \frac{n}{2}-1$ (see {\bf Figure 3}).
Thus the sum of $Sl(\tilde{A}_{k})_{P}$ and $Sl(\tilde{A}_{k+\frac{n}{2}})_{P}$ is $\frac{2}{n}$ times the area of the disk $\tilde{D}$.
Using condition $(2)$,
we can easily deduce condition $(3)$.
\end{proof}
\begin{lmm}\label{lemma3}
Suppose the point $P$ is on the boundary of the disk $\tilde{D}$.
For each $1 \leq k \leq n$,
the sum of $Piece(A_{k})$ and $Piece(A_{k+n})$ is $\frac{1}{n}$ times the area of the Baumkuchen $B$.
\end{lmm}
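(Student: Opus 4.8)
The plan is to bypass working with the $2n$ pieces individually --- which would force a separate treatment of the pieces near $P$, some of whose vertices $\tilde A_j$ coincide with $P$ --- and instead to combine the pieces in opposite pairs and then slice the resulting regions by circles concentric with $B$.

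First I would reformulate the quantity $Piece(A_k)+Piece(A_{k+n})$. Label the $n$ cutting lines $L_1,\dots,L_n$ so that $A_k$ and $A_{k+n}$ both lie on $L_k$. Since the $2n$ rays issuing from $P$ are equally spaced and (as $P$ is an interior point of $D$) their clockwise order along $\partial D$ coincides with their angular order about $P$, the lines $L_k$ and $L_{k+1}$ are adjacent, meeting at $P$ at angle $\frac{\pi}{n}$; and the pair of vertical wedges of opening $\frac{\pi}{n}$ that they cut out at $P$ are exactly the two angular sectors containing $Piece(A_k)$ and $Piece(A_{k+n})$. Hence $Piece(A_k)+Piece(A_{k+n})$ is the area of the part of $B$ lying in this pair of wedges. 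The $n$ such pairs (for $k=1,\dots,n$) tile the plane about $P$, so Lemma~\ref{lemma3} is equivalent to the statement that each of them meets $B$ in a region of area $\frac1n$ times that of $B$.

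To prove this, I would cut $B$ into the circles $C_r$ of radius $r$ centred at $O$, where $r$ runs from the radius of $\tilde D$ up to the radius of $D$. Because $P\in\partial\tilde D$, the segment $OP$ has length equal to the radius of $\tilde D$, so $P$ lies strictly inside $C_r$ for every such $r$ exceeding that radius; for such $r$, the lines $L_k$ and $L_{k+1}$ are two chords of $C_r$ crossing at the interior point $P$. By the classical theorem on the angle between two chords meeting inside a circle, the angle $\frac{\pi}{n}$ at $P$ equals half the sum of the two vertically opposite arcs it intercepts --- and these two arcs are precisely the parts of $C_r$ lying in our pair of wedges. Thus those two arcs have total length $\frac{2\pi r}{n}$, i.e.\ they subtend total angular measure $\frac{2\pi}{n}$ at $O$, a value independent of $r$. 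Integrating over $r$ --- equivalently, applying Cavalieri's principle in polar coordinates about $O$, using that this fraction $\frac1n$ of the circumference is the same for every circle $C_r$ filling out $B$ --- we conclude that the pair of wedges meets $B$ in a region of area $\frac1n$ times that of $B$, which is exactly the assertion; this proves Lemma~\ref{lemma3}. I would also note that the argument used only $P\in\tilde D$, which already places $P$ inside every circle $C_r$ occurring in $B$, so it in fact yields Theorem~\ref{Baumkuchen Theorem} directly.

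The step I expect to be the main obstacle is the bookkeeping linking the reformulation of the first step with the chord--angle identity of the second: one must check that the relevant angle at $P$ is the narrow one $\frac{\pi}{n}$ and not its supplement $\pi-\frac{\pi}{n}$, that the clockwise labelling of the $A_j$ along $\partial D$ really matches the angular order of the rays $PA_j$ about $P$ (so that consecutive indices give adjacent lines), and that of the four arcs into which $L_k$ and $L_{k+1}$ split $C_r$ it is exactly the two narrow ones that constitute $C_r$ intersected with our pair of wedges. Beyond this, the only ingredient going past elementary plane geometry is the single radial integration in the last step, which is routine because all the circles $C_r$ share the common centre $O$. In particular Lemma~\ref{lemma2} is not needed for this proof; it will instead be used when proving the Pizza Theorem.
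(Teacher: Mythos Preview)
Your argument is correct, and it takes a genuinely different route from the paper's. The paper proves Lemma~\ref{lemma3} by a dissection: it cuts $Piece(A_k)$ along a transversal $C_1\tilde C_1$, glues the two halves onto the opposite piece $Piece(A_{k+n})$ along its radial sides $A_{k+n}P$ and $A_{k+n+1}P$ (using that $|A_k\tilde A_k|=|A_{k+n}P|$ when $P\in\partial\tilde D$), and then iterates this cut--and--paste, each time choosing the new transversal through $O$, until the resulting piece is a genuine annular sector. Lemma~\ref{lemma2}(1) (regularity of the inscribed $n$-gon $\tilde A_1\cdots\tilde A_n$) is invoked to see that the inner arc of this final sector is $\tfrac1n$ of $\partial\tilde D$, whence its area is $\tfrac1n$ of $B$.

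Your approach replaces this rearrangement by a foliation: you slice $B$ by the concentric circles $C_r$ and use the intersecting--chords angle theorem to show that, for every $r$, the pair of vertical $\tfrac{\pi}{n}$-wedges at $P$ meets $C_r$ in arcs of total central angle $\tfrac{2\pi}{n}$; the area statement then follows by integrating $r\,dr\,d\theta$ over the annulus. This is shorter, does not require Lemma~\ref{lemma2}, and, as you observe, works verbatim for any $P\in\tilde D$, yielding Theorem~\ref{Baumkuchen Theorem} without the auxiliary disk $D'$ the paper introduces. The trade-off is that your proof uses a (routine) integral, whereas the paper's stays entirely within finite dissections and elementary synthetic geometry. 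The bookkeeping worries you flag---that $A_k,A_{k+n}$ lie on the same cutting line, that clockwise order on $\partial D$ matches angular order about the interior point $P$, and that the $\tfrac{\pi}{n}$ angle intercepts the two arcs inside the chosen wedge pair---are all easily verified and pose no real obstacle.
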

\begin{proof}
Due to the symmetry,
the lengths of the line segments $A_{k}\tilde{A}_{k}$ and $A_{k+n}P$ are equal.
Divide $Piece(A_{k})$
by a line segment $C_{1}\tilde{C}_{1}$ connecting the point $C_{1}$ on the arc $A_{k}A_{k+1}$ 
and point $\tilde{C}_{1}$ on the arc $\tilde{A}_{k}\tilde{A}_{k+1}$.
We obtain a piece $F_{1}G_{1}\tilde{G}_{1}\tilde{F}_{1}$ (see {\bf Figure 4})
constructed by pasting the pieces $A_{k}C_{1}\tilde{C}_{1}\tilde{A}_{k}$ and 
$A_{k+1}C_{1}\tilde{C}_{1}\tilde{A}_{k+1}$ along with the line segments $A_{k+n}P$ and $A_{k+n+1}P$,
respectively.
Then $\tilde{F}_{1}$ and $\tilde{G}_{1}$ on the boundary of the disk $\tilde{D}$ are points where the point $\tilde{C}_{1}$ is split into two points.
Similarly,
$F_{1}$ and $G_{1}$ are two points on the boundary of the disk $D$.
It is clear that the sum of $Piece (A_{k})$ and $Piece (A_{k+n})$ is equal to the area of the piece $F_{1}G_{1}\tilde{G}_{1}\tilde{F}_{1}$.
Using the symmetry,
we inductively obtain a piece $F_{l}G_{l}\tilde{G}_{l}\tilde{F}_{l}$ (see {\bf Figure 5}) constructed by pasting the pieces 
$G_{l-1}C_{l}\tilde{C}_{l}\tilde{G}_{l-1}$ and $F_{l-1}C_{l}\tilde{C}_{l}\tilde{F}_{l-1}$ along the line segment 
$C_{l-1}\tilde{C}_{l-1}$,
where $C_{l}$ and $\tilde{C}_{l}$ are points on the arc $F_{l-1}G_{l-1}$ in the disk $D$ and on the arc $\tilde{F}_{l-1}\tilde{G}_{l-1}$ in the disk $\tilde{D}$,
respectively.
There is a straight line $C_{l}\tilde{C}_{l}$ passing through the point $O$.
Then the length of arc $\tilde{F}_{l}\tilde{G}_{l}$ is $\frac{1}{n}$ of the circumference of the disk $\tilde{D}$
by the condition (1) of Lemma \ref{lemma2}.
Since the area of the piece $F_{l}G_{l}\tilde{G}_{l}\tilde{F}_{l}$ is $\frac{1}{n}$ times the area of the Baumkuchen $B$,
so is the sum of $Piece (A_{k})$ and $Piece (A_{k+n})$.
\end{proof}
\[
{\unitlength 0.1in%
\begin{picture}(48.2000,24.4000)(8.4000,-40.7000)%
%
\special{pn 8}%
\special{ar 2000 2800 610 610 0.0000000 6.2831853}%
%
\special{pn 8}%
\special{ar 2000 2800 990 990 0.0000000 6.2831853}%
%
\special{pn 8}%
\special{ar 4600 2820 990 990 0.0000000 6.2831853}%
%
\special{pn 8}%
\special{ar 4600 2820 610 610 0.0000000 6.2831853}%
%
\special{pn 8}%
\special{pa 1800 3380}%
\special{pa 1800 3780}%
\special{fp}%
%
\special{pn 8}%
\special{pa 1800 3380}%
\special{pa 1800 2230}%
\special{da 0.070}%
%
\special{pn 8}%
\special{pa 1800 2230}%
\special{pa 1800 1840}%
\special{fp}%
%
\special{pn 8}%
\special{pa 1480 3640}%
\special{pa 2920 2410}%
\special{da 0.070}%
%
\special{pn 8}%
\special{pa 2920 2410}%
\special{pa 2590 2690}%
\special{fp}%
%
\special{pn 8}%
\special{pa 1470 3650}%
\special{pa 1800 3370}%
\special{fp}%
\put(19.5000,-28.4000){\makebox(0,0)[lb]{$\bullet$}}%
%
\special{pn 8}%
\special{pa 2380 3290}%
\special{pa 1600 2340}%
\special{da 0.070}%
%
\special{pn 8}%
\special{pa 1400 2810}%
\special{pa 2610 2800}%
\special{da 0.070}%
%
\special{pn 20}%
\special{pa 1040 3000}%
\special{pa 1430 3000}%
\special{fp}%
%
\special{pn 20}%
\special{pa 2200 2230}%
\special{pa 2200 1840}%
\special{fp}%
%
\special{pn 20}%
\special{pa 2200 3760}%
\special{pa 2200 3380}%
\special{fp}%
%
\special{pn 4}%
\special{pa 2180 1840}%
\special{pa 1810 2210}%
\special{fp}%
\special{pa 2140 1820}%
\special{pa 1800 2160}%
\special{fp}%
\special{pa 2090 1810}%
\special{pa 1800 2100}%
\special{fp}%
\special{pa 2030 1810}%
\special{pa 1800 2040}%
\special{fp}%
\special{pa 1970 1810}%
\special{pa 1800 1980}%
\special{fp}%
\special{pa 1900 1820}%
\special{pa 1800 1920}%
\special{fp}%
\special{pa 1830 1830}%
\special{pa 1800 1860}%
\special{fp}%
\special{pa 2190 1890}%
\special{pa 1880 2200}%
\special{fp}%
\special{pa 2190 1950}%
\special{pa 1950 2190}%
\special{fp}%
\special{pa 2190 2010}%
\special{pa 2010 2190}%
\special{fp}%
\special{pa 2190 2070}%
\special{pa 2060 2200}%
\special{fp}%
\special{pa 2190 2130}%
\special{pa 2120 2200}%
\special{fp}%
\special{pa 2190 2190}%
\special{pa 2170 2210}%
\special{fp}%
%
\special{pn 4}%
\special{pa 2120 3400}%
\special{pa 1800 3720}%
\special{fp}%
\special{pa 2190 3390}%
\special{pa 1810 3770}%
\special{fp}%
\special{pa 2190 3450}%
\special{pa 1860 3780}%
\special{fp}%
\special{pa 2190 3510}%
\special{pa 1910 3790}%
\special{fp}%
\special{pa 2190 3570}%
\special{pa 1970 3790}%
\special{fp}%
\special{pa 2190 3630}%
\special{pa 2030 3790}%
\special{fp}%
\special{pa 2190 3690}%
\special{pa 2090 3790}%
\special{fp}%
\special{pa 2060 3400}%
\special{pa 1800 3660}%
\special{fp}%
\special{pa 1990 3410}%
\special{pa 1800 3600}%
\special{fp}%
\special{pa 1940 3400}%
\special{pa 1800 3540}%
\special{fp}%
\special{pa 1880 3400}%
\special{pa 1800 3480}%
\special{fp}%
\special{pa 1830 3390}%
\special{pa 1800 3420}%
\special{fp}%
%
\special{pn 4}%
\special{pa 2630 2050}%
\special{pa 2370 2310}%
\special{fp}%
\special{pa 2700 2100}%
\special{pa 2430 2370}%
\special{fp}%
\special{pa 2750 2170}%
\special{pa 2490 2430}%
\special{fp}%
\special{pa 2810 2230}%
\special{pa 2530 2510}%
\special{fp}%
\special{pa 2850 2310}%
\special{pa 2570 2590}%
\special{fp}%
\special{pa 2890 2390}%
\special{pa 2620 2660}%
\special{fp}%
\special{pa 2570 1990}%
\special{pa 2290 2270}%
\special{fp}%
\special{pa 2490 1950}%
\special{pa 2220 2220}%
\special{fp}%
\special{pa 2410 1910}%
\special{pa 2210 2110}%
\special{fp}%
\special{pa 2330 1870}%
\special{pa 2210 1990}%
\special{fp}%
\special{pa 2240 1840}%
\special{pa 2210 1870}%
\special{fp}%
%
\special{pn 4}%
\special{pa 1570 3230}%
\special{pa 1300 3500}%
\special{fp}%
\special{pa 1630 3290}%
\special{pa 1370 3550}%
\special{fp}%
\special{pa 1710 3330}%
\special{pa 1430 3610}%
\special{fp}%
\special{pa 1510 3170}%
\special{pa 1250 3430}%
\special{fp}%
\special{pa 1470 3090}%
\special{pa 1190 3370}%
\special{fp}%
\special{pa 1420 3020}%
\special{pa 1150 3290}%
\special{fp}%
\special{pa 1310 3010}%
\special{pa 1110 3210}%
\special{fp}%
\special{pa 1190 3010}%
\special{pa 1070 3130}%
\special{fp}%
%
\special{pn 20}%
\special{pa 1800 3540}%
\special{pa 1630 3710}%
\special{fp}%
\special{pa 1800 3600}%
\special{pa 1670 3730}%
\special{fp}%
\special{pa 1800 3660}%
\special{pa 1720 3740}%
\special{fp}%
\special{pa 1800 3720}%
\special{pa 1760 3760}%
\special{fp}%
\special{pa 1800 3480}%
\special{pa 1590 3690}%
\special{fp}%
\special{pa 1800 3420}%
\special{pa 1550 3670}%
\special{fp}%
\special{pa 1710 3450}%
\special{pa 1510 3650}%
\special{fp}%
%
\special{pn 20}%
\special{pa 3630 3010}%
\special{pa 4030 3010}%
\special{fp}%
%
\special{pn 20}%
\special{pa 4600 3820}%
\special{pa 4600 3430}%
\special{fp}%
%
\special{pn 20}%
\special{pa 4600 2200}%
\special{pa 4600 1830}%
\special{fp}%
%
\special{pn 20}%
\special{pa 4810 3780}%
\special{pa 4810 3390}%
\special{fp}%
%
\special{pn 8}%
\special{pa 4810 3390}%
\special{pa 4810 2250}%
\special{da 0.070}%
%
\special{pn 20}%
\special{pa 4810 2250}%
\special{pa 4810 1860}%
\special{fp}%
%
\special{pn 8}%
\special{pa 4600 3420}%
\special{pa 4600 2210}%
\special{da 0.070}%
%
\special{pn 20}%
\special{pa 5600 2810}%
\special{pa 5210 2810}%
\special{fp}%
%
\special{pn 8}%
\special{pa 5210 2810}%
\special{pa 4010 2810}%
\special{da 0.070}%
%
\special{pn 4}%
\special{pa 4800 1980}%
\special{pa 4610 2170}%
\special{fp}%
\special{pa 4800 2040}%
\special{pa 4630 2210}%
\special{fp}%
\special{pa 4800 2100}%
\special{pa 4680 2220}%
\special{fp}%
\special{pa 4800 2160}%
\special{pa 4740 2220}%
\special{fp}%
\special{pa 4800 1920}%
\special{pa 4610 2110}%
\special{fp}%
\special{pa 4800 1860}%
\special{pa 4610 2050}%
\special{fp}%
\special{pa 4760 1840}%
\special{pa 4610 1990}%
\special{fp}%
\special{pa 4700 1840}%
\special{pa 4610 1930}%
\special{fp}%
\special{pa 4650 1830}%
\special{pa 4610 1870}%
\special{fp}%
%
\special{pn 4}%
\special{pa 4800 3540}%
\special{pa 4610 3730}%
\special{fp}%
\special{pa 4800 3600}%
\special{pa 4610 3790}%
\special{fp}%
\special{pa 4800 3660}%
\special{pa 4650 3810}%
\special{fp}%
\special{pa 4800 3720}%
\special{pa 4720 3800}%
\special{fp}%
\special{pa 4800 3480}%
\special{pa 4610 3670}%
\special{fp}%
\special{pa 4800 3420}%
\special{pa 4610 3610}%
\special{fp}%
\special{pa 4750 3410}%
\special{pa 4610 3550}%
\special{fp}%
\special{pa 4680 3420}%
\special{pa 4610 3490}%
\special{fp}%
%
\special{pn 13}%
\special{pa 5360 2200}%
\special{pa 5100 2460}%
\special{fp}%
\special{pa 5390 2230}%
\special{pa 5120 2500}%
\special{fp}%
\special{pa 5420 2260}%
\special{pa 5140 2540}%
\special{fp}%
\special{pa 5440 2300}%
\special{pa 5160 2580}%
\special{fp}%
\special{pa 5460 2340}%
\special{pa 5180 2620}%
\special{fp}%
\special{pa 5480 2380}%
\special{pa 5190 2670}%
\special{fp}%
\special{pa 5500 2420}%
\special{pa 5200 2720}%
\special{fp}%
\special{pa 5520 2460}%
\special{pa 5210 2770}%
\special{fp}%
\special{pa 5530 2510}%
\special{pa 5240 2800}%
\special{fp}%
\special{pa 5550 2550}%
\special{pa 5300 2800}%
\special{fp}%
\special{pa 5560 2600}%
\special{pa 5360 2800}%
\special{fp}%
\special{pa 5570 2650}%
\special{pa 5420 2800}%
\special{fp}%
\special{pa 5580 2700}%
\special{pa 5480 2800}%
\special{fp}%
\special{pa 5590 2750}%
\special{pa 5540 2800}%
\special{fp}%
\special{pa 5330 2170}%
\special{pa 5070 2430}%
\special{fp}%
\special{pa 5310 2130}%
\special{pa 5040 2400}%
\special{fp}%
\special{pa 5280 2100}%
\special{pa 5010 2370}%
\special{fp}%
\special{pa 5240 2080}%
\special{pa 4980 2340}%
\special{fp}%
\special{pa 5210 2050}%
\special{pa 4950 2310}%
\special{fp}%
\special{pa 5180 2020}%
\special{pa 4910 2290}%
\special{fp}%
\special{pa 5140 2000}%
\special{pa 4870 2270}%
\special{fp}%
\special{pa 5100 1980}%
\special{pa 4830 2250}%
\special{fp}%
\special{pa 5070 1950}%
\special{pa 4820 2200}%
\special{fp}%
\special{pa 5030 1930}%
\special{pa 4820 2140}%
\special{fp}%
\special{pa 4990 1910}%
\special{pa 4820 2080}%
\special{fp}%
\special{pa 4940 1900}%
\special{pa 4820 2020}%
\special{fp}%
\special{pa 4900 1880}%
\special{pa 4820 1960}%
\special{fp}%
\special{pa 4850 1870}%
\special{pa 4820 1900}%
\special{fp}%
%
\special{pn 13}%
\special{pa 4240 3320}%
\special{pa 3980 3580}%
\special{fp}%
\special{pa 4280 3340}%
\special{pa 4010 3610}%
\special{fp}%
\special{pa 4320 3360}%
\special{pa 4040 3640}%
\special{fp}%
\special{pa 4360 3380}%
\special{pa 4080 3660}%
\special{fp}%
\special{pa 4400 3400}%
\special{pa 4120 3680}%
\special{fp}%
\special{pa 4450 3410}%
\special{pa 4160 3700}%
\special{fp}%
\special{pa 4500 3420}%
\special{pa 4200 3720}%
\special{fp}%
\special{pa 4550 3430}%
\special{pa 4240 3740}%
\special{fp}%
\special{pa 4590 3450}%
\special{pa 4290 3750}%
\special{fp}%
\special{pa 4590 3510}%
\special{pa 4330 3770}%
\special{fp}%
\special{pa 4590 3570}%
\special{pa 4380 3780}%
\special{fp}%
\special{pa 4590 3630}%
\special{pa 4430 3790}%
\special{fp}%
\special{pa 4590 3690}%
\special{pa 4480 3800}%
\special{fp}%
\special{pa 4590 3750}%
\special{pa 4530 3810}%
\special{fp}%
\special{pa 4210 3290}%
\special{pa 3950 3550}%
\special{fp}%
\special{pa 4180 3260}%
\special{pa 3910 3530}%
\special{fp}%
\special{pa 4150 3230}%
\special{pa 3880 3500}%
\special{fp}%
\special{pa 4120 3200}%
\special{pa 3860 3460}%
\special{fp}%
\special{pa 4090 3170}%
\special{pa 3830 3430}%
\special{fp}%
\special{pa 4070 3130}%
\special{pa 3800 3400}%
\special{fp}%
\special{pa 4050 3090}%
\special{pa 3780 3360}%
\special{fp}%
\special{pa 4030 3050}%
\special{pa 3760 3320}%
\special{fp}%
\special{pa 4000 3020}%
\special{pa 3730 3290}%
\special{fp}%
\special{pa 3940 3020}%
\special{pa 3710 3250}%
\special{fp}%
\special{pa 3880 3020}%
\special{pa 3690 3210}%
\special{fp}%
\special{pa 3820 3020}%
\special{pa 3680 3160}%
\special{fp}%
\special{pa 3760 3020}%
\special{pa 3660 3120}%
\special{fp}%
\special{pa 3700 3020}%
\special{pa 3650 3070}%
\special{fp}%
\put(45.7000,-28.5000){\makebox(0,0)[lb]{$\bullet$}}%
\put(17.6000,-17.7000){\makebox(0,0)[lb]{$\scriptstyle A_{k}$}}%
\put(29.6000,-23.8000){\makebox(0,0)[lb]{$\scriptstyle A_{k+1}$}}%
\put(17.5000,-39.4000){\makebox(0,0)[lb]{$\scriptstyle A_{k+n}$}}%
\put(11.9000,-38.4000){\makebox(0,0)[lb]{$\scriptstyle A_{k+n+1}$}}%
\put(8.4000,-30.6000){\makebox(0,0)[lb]{$\scriptstyle F_{1}$}}%
\put(21.6000,-39.4000){\makebox(0,0)[lb]{$\scriptstyle G_{1}$}}%
\put(21.6000,-17.7000){\makebox(0,0)[lb]{$\scriptstyle C_{1}$}}%
\put(20.1000,-26.9000){\makebox(0,0)[lb]{$\scriptstyle O$}}%
\put(22.6000,-34.9000){\makebox(0,0)[lb]{$\scriptstyle \tilde{G}_{1}$}}%
\put(15.0000,-30.5000){\makebox(0,0)[lb]{$\scriptstyle \tilde{F}_{1}$}}%
\put(18.2000,-32.8000){\makebox(0,0)[lb]{$\scriptstyle P$}}%
\put(18.5000,-23.7000){\makebox(0,0)[lb]{$\scriptstyle \tilde{A}_{k}$}}%
\put(21.6000,-23.9000){\makebox(0,0)[lb]{$\scriptstyle \tilde{C}_{1}$}}%
\put(26.5000,-27.8000){\makebox(0,0)[lb]{$\scriptstyle \tilde{A}_{k+1}$}}%
\put(17.7000,-34.1000){\makebox(0,0)[lb]{$\bullet$}}%
\put(45.6000,-17.6000){\makebox(0,0)[lb]{$\scriptstyle F_{l}$}}%
\put(47.8000,-18.0000){\makebox(0,0)[lb]{$\scriptstyle C_{l-1}$}}%
\put(56.6000,-28.6000){\makebox(0,0)[lb]{$\scriptstyle G_{l}$}}%
\put(47.8000,-39.4000){\makebox(0,0)[lb]{$\scriptstyle G_{l-1}$}}%
\put(45.6000,-39.6000){\makebox(0,0)[lb]{$\scriptstyle C_{l}$}}%
\put(33.5000,-30.5000){\makebox(0,0)[lb]{$\scriptstyle F_{l-1}$}}%
\put(41.1000,-30.5000){\makebox(0,0)[lb]{$\scriptstyle \tilde{F}_{l-1}$}}%
\put(44.3000,-33.5000){\makebox(0,0)[lb]{$\scriptstyle \tilde{C}_{l}$}}%
\put(48.7000,-35.2000){\makebox(0,0)[lb]{$\scriptstyle \tilde{G}_{l-1}$}}%
\put(52.5000,-29.8000){\makebox(0,0)[lb]{$\scriptstyle \tilde{G}_{l}$}}%
\put(44.4000,-24.1000){\makebox(0,0)[lb]{$\scriptstyle \tilde{F}_{l}$}}%
\put(48.4000,-25.5000){\makebox(0,0)[lb]{$\scriptstyle \tilde{C}_{l-1}$}}%
\put(46.5000,-27.0000){\makebox(0,0)[lb]{$\scriptstyle O$}}%
\put(17.0000,-42.0000){\makebox(0,0)[lb]{{\bf Figure 4}}}%
\put(43.2000,-41.9000){\makebox(0,0)[lb]{{\bf Figure 5}}}%
\end{picture}}%
\]

Finally,
we introduce Pizza Theorem.
It was proposed by L.~J.~Upton in Mathematics Magazine almost fifty years ago (cf.~\cite{Upton}).
In \cite{Goldberg},
Goldberg first proved this theorem and the results are widely introduced throughout the literature (cf.~\cite{Berz1}, \cite{Berz2}, \cite{Hart}, \cite{Hess}, \cite{Hirs}, \cite{Mabry}, \cite{Nelsen}, and \cite{Pearce}).
Present the statement of this theorem (see {\bf Figure 6}).
Let $P$ be a point of the disk $D$, and  let $n$ be a multiple of $4$. 
Next,
divide the disk $D$ into $2n$ slices by making $n$ straight lines through $P$, where the lines meet to form $2n$ equal angles.
Number the slices consecutively in a clockwise direction. 
Then,
we obtain the following theorem.
\begin{thm}[Pizza Theorem \cite{Upton}]\label{thm:Pizza Theorem}
We can distribute the slices to $\frac{n}{2}$ people so that the sum of the areas is equal.
\end{thm}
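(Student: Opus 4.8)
The plan is to realize the pizza $D$ as the union of a small disk and a Baumkuchen for which Lemmas~\ref{lemma2} and~\ref{lemma3} are directly available, and then to hand each person a union of two ``opposite'' Baumkuchen pieces together with the disk slices that accompany them. We may assume $P\neq O$ and that $P$ lies in the interior of $D$: the case $P=O$ is immediate by rotational symmetry, and the case $P\in\partial D$ follows by a limiting argument (or by applying Lemma~\ref{lemma2} to $D$ itself). Let $\tilde D$ be the disk centered at $O$ of radius $\abs{OP}$, so that $P\in\partial\tilde D$ and $\tilde D\subseteq D$, and let $B=D\setminus\Int\tilde D$ be the associated Baumkuchen. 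The $n$ lines through $P$ cut $D$ into the $2n$ pizza slices $S_{1},\dots,S_{2n}$ numbered clockwise, and each $S_{i}$ is the disjoint union of its part in $B$, which is precisely the Baumkuchen piece $Piece(A_{i})$, and its part in $\tilde D$, which is either empty or one of the regions occurring in Lemma~\ref{lemma2}: a proper slice $Sl(\tilde A_{j})_{P}$, or one of the two end pieces cut off near $P$ by the extreme chords. After aligning the clockwise numberings one finds a block of $n-1$ consecutive slices whose two bounding rays both enter $\tilde D$, contributing $Sl(\tilde A_{1})_{P},\dots,Sl(\tilde A_{n-1})_{P}$; the two slices flanking this block contribute the two end pieces of Lemma~\ref{lemma2}(3); and the remaining $n-1$ slices lie entirely in $B$.

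For the distribution, I would give person $j$ (for $j=1,\dots,\tfrac n2$) the four slices $S_{j},\,S_{j+n/2},\,S_{j+n},\,S_{j+3n/2}$, indices taken modulo $2n$; as $j$ ranges over $1,\dots,\tfrac n2$ these quadruples partition $\{S_{1},\dots,S_{2n}\}$. The point of this grouping is that $\{j,j+\tfrac n2,j+n,j+\tfrac{3n}{2}\}$ is at once a union of two of the pairs $\{k,k+n\}$ governed by Lemma~\ref{lemma3} and, once restricted to the slices meeting $\tilde D$, one of the pairs $\{k,k+\tfrac n2\}$ governed by Lemma~\ref{lemma2}. Accordingly, the area received by person $j$ splits into a Baumkuchen part and a $\tilde D$-part. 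The Baumkuchen part is $\bigl(Piece(A_{j})+Piece(A_{j+n})\bigr)+\bigl(Piece(A_{j+n/2})+Piece(A_{j+3n/2})\bigr)$, which by Lemma~\ref{lemma3}, applied with $k=j$ and with $k=j+\tfrac n2$, equals $\tfrac2n$ times the area of $B$, the same for every $j$.

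For the $\tilde D$-part, only the slices meeting $\tilde D$ contribute, and a short counting argument shows that among the four indices $j,j+\tfrac n2,j+n,j+\tfrac{3n}{2}$ exactly two or three name such slices. If exactly two, they are $\tfrac n2$ apart and give two proper slices $Sl(\tilde A_{a})_{P}$ and $Sl(\tilde A_{a+n/2})_{P}$ with $1\le a\le\tfrac n2-1$, whose areas sum to $\tfrac2n$ times the area of $\tilde D$ by Lemma~\ref{lemma2}(2). If exactly three, the quadruple contains both flanking slices together with $Sl(\tilde A_{n/2})_{P}$, and Lemma~\ref{lemma2}(3) again gives the sum $\tfrac2n$ times the area of $\tilde D$. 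In either case the $\tilde D$-part is independent of $j$, so every person receives area $\tfrac2n(\text{area of }B)+\tfrac2n(\text{area of }\tilde D)=\tfrac2n(\text{area of }D)=\tfrac{1}{n/2}(\text{area of }D)$, and all $\tfrac n2$ shares are equal.

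The main obstacle is bookkeeping rather than new geometry: one must fix the clockwise indexing so that the three interlocking enumerations — the $2n$ pizza slices, the $2n$ Baumkuchen pieces $Piece(A_{i})$, and the $n+1$ regions into which $\tilde D$ is divided in Lemma~\ref{lemma2} — are consistently aligned, and then verify the combinatorial claim that each quadruple meets the block of slices touching $\tilde D$ in exactly one of the two patterns (two proper slices, or the two flanking end pieces plus the middle slice $Sl(\tilde A_{n/2})_{P}$) that let Lemma~\ref{lemma2}(2) or~(3) be invoked. Dealing with the degenerate positions $P=O$ and $P\in\partial D$ also requires a word. Once the alignment is in place, the conclusion is just the two area identities above added together.
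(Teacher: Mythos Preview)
Your proposal is correct and follows essentially the same route as the paper: take $\tilde D$ to be the disk of radius $\lvert OP\rvert$, decompose each pizza slice into its Baumkuchen piece and its $\tilde D$-part, hand person $j$ the quadruple $S_{j},S_{j+n/2},S_{j+n},S_{j+3n/2}$, and invoke Lemma~\ref{lemma3} for the Baumkuchen contribution and Lemma~\ref{lemma2} for the inner-disk contribution. The paper's proof is extremely terse and leaves implicit exactly the index-alignment and case analysis (two proper $\tilde D$-slices versus the two end pieces plus $Sl(\tilde A_{n/2})_{P}$) that you spell out, but the underlying argument is the same.
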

\begin{proof}
Let $\tilde{D}$ be the disk whose radius is the line segment $OP$,
and let $B$ be the Baumkuchen given by the set $D \setminus {\rm Int}\ \tilde{D}$.
Then,
we deduce the Pizza Theorem by applying Lemmas \ref{lemma2} and \ref{lemma3}.
In particular,
we give the $k$-th person four slices of $Sl(A_k)_{P},\ Sl(A_{k+\frac{n}{2}})_{P},\ Sl(A_{k+n})_{P}$
and $Sl(A_{k+\frac{3n}{2}})_{P}$
by the condition $(3)$ of Lemma \ref{lemma3}.
\end{proof}
\[
{\unitlength 0.1in%
\begin{picture}(19.6000,23.2000)(16.2000,-41.3000)%
%
\special{pn 8}%
\special{ar 2600 2790 980 980 4.7226979 4.7123890}%
%
\special{pn 8}%
\special{pa 2220 3700}%
\special{pa 2200 3700}%
\special{fp}%
\special{pa 2200 3690}%
\special{pa 2200 1900}%
\special{fp}%
%
\special{pn 8}%
\special{pa 1650 3010}%
\special{pa 3560 3010}%
\special{fp}%
%
\special{pn 8}%
\special{pa 1820 3390}%
\special{pa 3210 2030}%
\special{fp}%
%
\special{pn 8}%
\special{pa 2910 3720}%
\special{pa 1670 2490}%
\special{fp}%
\put(22.5000,-28.3000){\makebox(0,0)[lb]{$\scriptstyle P$}}%
%
\special{pn 4}%
\special{pa 3490 2390}%
\special{pa 2870 3010}%
\special{fp}%
\special{pa 3470 2350}%
\special{pa 2810 3010}%
\special{fp}%
\special{pa 3450 2310}%
\special{pa 2750 3010}%
\special{fp}%
\special{pa 3430 2270}%
\special{pa 2690 3010}%
\special{fp}%
\special{pa 3400 2240}%
\special{pa 2630 3010}%
\special{fp}%
\special{pa 3380 2200}%
\special{pa 2570 3010}%
\special{fp}%
\special{pa 3350 2170}%
\special{pa 2510 3010}%
\special{fp}%
\special{pa 3320 2140}%
\special{pa 2450 3010}%
\special{fp}%
\special{pa 3300 2100}%
\special{pa 2390 3010}%
\special{fp}%
\special{pa 3260 2080}%
\special{pa 2330 3010}%
\special{fp}%
\special{pa 3230 2050}%
\special{pa 2270 3010}%
\special{fp}%
\special{pa 3510 2430}%
\special{pa 2930 3010}%
\special{fp}%
\special{pa 3520 2480}%
\special{pa 2990 3010}%
\special{fp}%
\special{pa 3540 2520}%
\special{pa 3050 3010}%
\special{fp}%
\special{pa 3550 2570}%
\special{pa 3110 3010}%
\special{fp}%
\special{pa 3560 2620}%
\special{pa 3170 3010}%
\special{fp}%
\special{pa 3570 2670}%
\special{pa 3230 3010}%
\special{fp}%
\special{pa 3580 2720}%
\special{pa 3290 3010}%
\special{fp}%
\special{pa 3580 2780}%
\special{pa 3350 3010}%
\special{fp}%
\special{pa 3580 2840}%
\special{pa 3410 3010}%
\special{fp}%
\special{pa 3570 2910}%
\special{pa 3470 3010}%
\special{fp}%
\special{pa 3560 2980}%
\special{pa 3530 3010}%
\special{fp}%
%
\special{pn 4}%
\special{pa 2590 3410}%
\special{pa 2290 3710}%
\special{fp}%
\special{pa 2620 3440}%
\special{pa 2330 3730}%
\special{fp}%
\special{pa 2650 3470}%
\special{pa 2380 3740}%
\special{fp}%
\special{pa 2680 3500}%
\special{pa 2430 3750}%
\special{fp}%
\special{pa 2710 3530}%
\special{pa 2480 3760}%
\special{fp}%
\special{pa 2740 3560}%
\special{pa 2530 3770}%
\special{fp}%
\special{pa 2770 3590}%
\special{pa 2590 3770}%
\special{fp}%
\special{pa 2800 3620}%
\special{pa 2650 3770}%
\special{fp}%
\special{pa 2830 3650}%
\special{pa 2720 3760}%
\special{fp}%
\special{pa 2860 3680}%
\special{pa 2790 3750}%
\special{fp}%
\special{pa 2560 3380}%
\special{pa 2240 3700}%
\special{fp}%
\special{pa 2530 3350}%
\special{pa 2200 3680}%
\special{fp}%
\special{pa 2500 3320}%
\special{pa 2200 3620}%
\special{fp}%
\special{pa 2470 3290}%
\special{pa 2200 3560}%
\special{fp}%
\special{pa 2440 3260}%
\special{pa 2200 3500}%
\special{fp}%
\special{pa 2410 3230}%
\special{pa 2200 3440}%
\special{fp}%
\special{pa 2380 3200}%
\special{pa 2200 3380}%
\special{fp}%
\special{pa 2350 3170}%
\special{pa 2200 3320}%
\special{fp}%
\special{pa 2320 3140}%
\special{pa 2200 3260}%
\special{fp}%
\special{pa 2290 3110}%
\special{pa 2200 3200}%
\special{fp}%
\special{pa 2260 3080}%
\special{pa 2200 3140}%
\special{fp}%
\special{pa 2230 3050}%
\special{pa 2200 3080}%
\special{fp}%
%
\special{pn 4}%
\special{pa 2030 3010}%
\special{pa 1760 3280}%
\special{fp}%
\special{pa 2090 3010}%
\special{pa 1780 3320}%
\special{fp}%
\special{pa 2150 3010}%
\special{pa 1810 3350}%
\special{fp}%
\special{pa 1970 3010}%
\special{pa 1740 3240}%
\special{fp}%
\special{pa 1910 3010}%
\special{pa 1720 3200}%
\special{fp}%
\special{pa 1850 3010}%
\special{pa 1700 3160}%
\special{fp}%
\special{pa 1790 3010}%
\special{pa 1680 3120}%
\special{fp}%
\special{pa 1730 3010}%
\special{pa 1670 3070}%
\special{fp}%
%
\special{pn 4}%
\special{pa 2200 2300}%
\special{pa 1840 2660}%
\special{fp}%
\special{pa 2200 2360}%
\special{pa 1870 2690}%
\special{fp}%
\special{pa 2200 2420}%
\special{pa 1900 2720}%
\special{fp}%
\special{pa 2200 2480}%
\special{pa 1930 2750}%
\special{fp}%
\special{pa 2200 2540}%
\special{pa 1960 2780}%
\special{fp}%
\special{pa 2200 2600}%
\special{pa 1990 2810}%
\special{fp}%
\special{pa 2200 2660}%
\special{pa 2020 2840}%
\special{fp}%
\special{pa 2200 2720}%
\special{pa 2050 2870}%
\special{fp}%
\special{pa 2200 2780}%
\special{pa 2080 2900}%
\special{fp}%
\special{pa 2200 2840}%
\special{pa 2110 2930}%
\special{fp}%
\special{pa 2200 2900}%
\special{pa 2140 2960}%
\special{fp}%
\special{pa 2200 2960}%
\special{pa 2170 2990}%
\special{fp}%
\special{pa 2200 2240}%
\special{pa 1810 2630}%
\special{fp}%
\special{pa 2200 2180}%
\special{pa 1780 2600}%
\special{fp}%
\special{pa 2200 2120}%
\special{pa 1750 2570}%
\special{fp}%
\special{pa 2200 2060}%
\special{pa 1720 2540}%
\special{fp}%
\special{pa 2200 2000}%
\special{pa 1690 2510}%
\special{fp}%
\special{pa 2200 1940}%
\special{pa 1680 2460}%
\special{fp}%
\special{pa 2160 1920}%
\special{pa 1730 2350}%
\special{fp}%
\special{pa 1980 2040}%
\special{pa 1850 2170}%
\special{fp}%
\put(24.2000,-40.1000){\makebox(0,0)[lb]{$n=4$}}%
\put(22.8000,-42.6000){\makebox(0,0)[lb]{{\bf Figure 6}}}%
\end{picture}}%
\]

%
%
%
%

\end{document}